\theoremstyle{plain}
\newtheorem{theorem}{Theorem}
\theoremstyle{proof}
\theoremstyle{definition}
\theoremstyle{remark}
\newtheorem{remark}{Remark}
\theoremstyle{lamma}
\numberwithin{equation}{section}
\numberwithin{lemma}{section}
\numberwithin{theorem}{section}
\numberwithin{remark}{section}
\numberwithin{prop}{section}
\numberwithin{corollary}{section}
\theoremstyle{thmrm}
\numberwithin{conjecture}{section}
\begin{document}
\title[Computation of Jacobi sums of orders \MakeLowercase{$l^{2}$} and  \MakeLowercase{$2l^{2}$} with prime \MakeLowercase{$l$}]{\large{Computation of Jacobi sums of orders \MakeLowercase{$l^{2}$} and  \MakeLowercase{$2l^{2}$} with prime \MakeLowercase{$l$}}}
\author{Md Helal Ahmed, Jagmohan Tanti and Sumant Pusph}
\address{Md Helal Ahmed @ Department of Mathematics, Central University of Jharkhand, Ranchi-835205, India}
\email{ahmed.helal@cuj.ac.in}

\address{Jagmohan Tanti @ Department of Mathematics, Central University of Jharkhand, Ranchi-835205, India}
\email{jagmohan.t@gmail.com}

\address{Sumant Pushp @ Department of Computer Science and Technology, Central University of Jharkhand, Ranchi-835205, India.}
\email{sumantpusph@gmail.com}

\keywords{Character; Cyclotomic numbers; Jacobi sums; Finite fields; Cyclotomic fields}
\subjclass[2010] {Primary: 11T24, 94A60, Secondary: 11T22}
\maketitle
\begin{abstract}
In this article, we present the fast computational algorithms for the Jacobi sums of orders $l^2$ and $2l^{2}$ with odd prime $l$ by formulating them in terms of the minimum number of cyclotomic numbers of the corresponding orders. We also implement two additional algorithms to validate these formulae, which are also useful for the demonstration of the minimality of cyclotomic numbers required.  
\end{abstract}
\section{Introduction}
Jacobi has many remarkable contributions to the field of mathematics like formulations of Jacobi symbol, Jacobi triple product, Jacobian, Jacobi elliptic functions etc. Among these, Jacobi sums have appeared as one of the most important findings. Many people have attempted to calculate the Jacobi sums of certain order in terms of the solutions of the corresponding Diophantine system. It has also been observed that the level of complexity in dealing with a Diophantine system increases with the increment in the concerned order, which has been observed as a challenge for the computation of Jacobi sums of higher orders.
\par
 The objective of this paper is to develop some fast computational algorithms for calculation of Jacobi sums.
\par
Let $p\in \mathbb{Z}$ be an odd prime and $q=p^r$ with $r\geq 1$ an integer. Let $e\geq 2$ be an integral divisor of $q-1$, then $q=ek+1$ for some positive integer $k$. Let $\gamma$ be a generator of the cyclic group $\mathbb{F}^{*}_{q}$. For $\zeta_e$ a primitive $e$-th root of unity, we define a multiplicative character $\chi_{e}$ of order $e$ on $\mathbb{F}^{*}_{q}$ by $\chi_{e}(\gamma)=\zeta_e$. We now extend $\chi_{e}$ to $\mathbb{F}_q$ by taking $\chi_{e}(0)=0$. For $0\leq i, j\leq e-1$, the Jacobi sums of order $e$ is defined by 
$$
J_e(i,j)= \sum_{v\in \mathbb{F}_q} \chi_{e}^i(v) \chi_{e}^j(v+1).
$$
For $0\leq a, b\leq e-1$, the cyclotomic numbers $(a, b)_e$ of order $e$ are defined as follows:
\begin{align*}
(a,b)_e:&= \#\{v\in\mathbb{F}_q|\chi_{e}(v)=\zeta_{e}^{a},\chi_{e}(v+1)=\zeta_{e}^{b}\}\\ &=\#\{v\in\mathbb{F}_q\setminus \{0,-1\}\mid {\rm ind}_{\gamma}v\equiv a \pmod e ,  {\ \rm ind}_{\gamma}(v+1)\equiv b \pmod e\}.
\end{align*}
In view of the definitions of Jacobi sums and cyclotomic numbers, over a given finite field $\mathbb{F}_q$, Jacobi sums (resp. cyclotomic numbers) of order $e$ mainly depend on two parameters, therefore these values could be naturally assembled into a matrix of order $e$.
Thus, the Jacobi sums $J_e(i,j)$ and the cyclotomic numbers $(a,b)_e$ are well connected  by the following relations \cite{Parnami2, Shirolkar1}:
\begin{equation} \label{01}
 \sum_a\sum_b(a,b)_e\zeta_e^{ai+bj}=J_e(i,j),
\end{equation} 
and
\begin{equation} \label{00}
 \sum_i\sum_j\zeta_e^{-(ai+bj)}J_e(i,j)=e^{2}(a,b)_e.
\end{equation}
Jacobi \cite{Jacobi1} introduced the Jacobi sums of orders $3, \ 4$ and $7$. In the literature, problem concerning to computation of Jacobi sums of a particular order also has been studied as the possible minimization in relations between Jacobi sums and cyclotomic numbers of that order, like for $e\leq 6$, $e=8,\ 10$ and $12$ the relations were established by Dickson \cite{Dickson1}. Later, he also studied the relations for $e=9,\ 14,\ 15,\ 16, \ 18,\ 20,\ 22,\ 24$, which are recorded in \cite{Dickson2,Dickson3}. Muskat \cite{Muskat1} established the relation of order $12$ in terms of the fourth root of unity to resolve the sign ambiguity which was occuring in the Dickson's relation and here he also studied for $e = 30$. He \cite{Muskat2} also provided complete results for order $14$. Complete methods of $e = 16$ and $20$ exist in Whiteman \cite{Whiteman5} and Muskat \cite{Muskat3} respectively. In fact Western \cite{Western1} determined Jacobi sums of orders $8,\ 9, \ 16$ earlier than Dickson. Baumert and Fredrickson \cite{Baumert1} gave corrections and removed the sign ambiguity in the Dicksons's work for $e=9, \ 18$. Zee \cite{Zee1,Zee2} found relations for orders $13,\ 22$ and $60$. Zee and Muskat \cite{Muskat4} provided the relations for $e=21,\ 28,\ 39,\ 55$ and $56$. Berndt and Evans \cite{Berndt2} obtained sums of orders $3,\ 4, \ 6,\ 8,\ 12,\ 20$ and $24$ and they also determined sums of orders $5,\ 10$ and $16$ in \cite{Berndt3}. Parnami, Agarwal and Rajwade \cite{Parnami2} obtained certain relationships for Jacobi sums of odd prime orders upto $19$. Furthermore, Katre and Rajwade \cite{Katre3} extended their work for Jacobi sums of general odd prime orders.
\par
Over the most recent couple of years, fast computation of Jacobi sums is one of the essential enthusiasm among researchers, in perspective of its applications to primality testing, cryptosystems and so forth \cite{Adleman1,Buhler1,Leung1,Mihailescu1,Mihailescu2}. As illustrated in \cite{Ireland1}, Jacobi sums could be used for estimating the number of integral solutions to certain congruences such as $x^{3}+y^{3}\equiv 1 \pmod p$. These estimates played a key role in the advancement of Weil conjectures \cite{Weil1}. Jacobi sums could be used for the determination of a number of solutions of diagonal equations over finite fields \cite{Berndt1}. 
\par
In perspective of equations (\ref{01}) and (\ref{00}), to determine a Jacobi sum  of order $e$, one needs to determine all the cyclotomic numbers of order $e$. The cyclotomic numbers of order $l^{2}$ have  been formulated by Shirolkar and Katre \cite{Shirolkar1}, where as Ahmed, Tanti and Hoque \cite{Helal1} have established the formula for $e=2l^{2}$. 
\par 
In this paper, we give algorithms for fast computation of Jacobi sums of orders $l^2$ and $2l^{2}$ with $l$ a prime $\geq 3$. The idea used behind this paper is to compute all the Jacobi sums of a particular order in terms of the minimal number of cyclotomic numbers of that order as hinted from \cite{Helal2}. Here we derive explicit expressions for Jacobi sums of orders $l^2$ and $2l^{2}$ in terms of the minimal numbers of cyclotomic numbers of orders $l^2$ and $2l^{2}$ respectively. We implement two algorithms (see algorithms \ref{algo1} and \ref{algo2}) to validate these expressions for the minimality. The implementation of algorithms has been carried out at a high performance computing lab in Department of Computer Sciences and Technology, Central University of Jharkhand.
\par
The paper is organized as follows: Section \ref{section2} presents some well known properties of cyclotomic numbers of order $e$. Section \ref{section3} presents algorithms for equality of cyclotomic numbers of orders $l^2$ and $2l^2$. Section \ref{section4} contains the expressions of Jacobi sums of orders $l^2$ \& $2l^2$. The fast computational algorithms for Jacobi sums are in section \ref{section5}. Finally, a brief conclusion is reflected in section \ref{Conclusion}.  

\section{Some useful expressions}\label{section2}
It is clear that $(a,b)_{e}=(a' ,b')_{e}$ whenever $a\equiv a'\pmod{e}$ and $b\equiv b'\pmod{e}$ where as $(a,b)_{e}= (e-a,b-a)_{e}$. These imply the following identities:
\begin{equation}\label{2.1}
(a,b)_{e}=\begin{cases}
 (b,a)_{e}\hspace*{1.751cm} \text{ if }  k \text{ is  even  or  } q=2^r,\\
(b+\frac{e}{2},a+\frac{e}{2})_{e}\hspace*{0.4cm} \text{ otherwise}.
\end{cases}. 
\end{equation}
Applying these facts, it is easy to see that 
\begin{equation} \label{2.2}
\sum_{a=0}^{e-1}\sum_{b=0}^{e-1}(a,b)_{e}=q-2,
\end{equation}
and
\begin{equation} \label{2.3}
\sum_{b=0}^{e-1}(a,b)_{e}=k-n_{a},
\end{equation}
where $n_{a}$ is given by 
\begin{equation*}
n_{a}=\begin{cases}
1 \quad \text{ if } a=0, 2\mid k \text{ or if } a=\frac{e}{2}, 2\nmid k;\\
0 \quad \text{ otherwise }.
\end{cases}
\end{equation*}
\section{Algorithms for equality of cyclotomic numbers}\label{section3}
Solution of cyclotomy of order $e$, does not require to determine all the cyclotomic numbers
of order $e$ \cite{Helal2}. The objective is to avoid the redundancy in calculations by dividing the cyclotomic numbers into classes as hinted from \cite{Helal2}, which certainly boost up the overall efficiency.
\par
This section, presents two algorithms which shows the equality relations of cyclotomic numbers of orders $l^{2}$ and $2l^{2}$ respectively. These algorithms exactly determine which cyclotomic numbers are enough for the determination of all the Jacobi sums of orders $l^{2}$ and $2l^{2}$ respectively. Thus, it helps us to faster the computation of these Jacobi sums. Also, these algorithms play a major role to validate the expressions for Jacobi sums of orders $l^{2}$ and $2l^{2}$ in terms of the minimum number of cyclotomic numbers of orders $l^{2}$ and $2l^{2}$ respectively. The expressions in Theorem \ref{thm1} gets validated by \textquoteleft Algorithm \ref{algo1}\textquoteright, and those in Theorems \ref{thm2} and \ref{thm3} get validated by \textquoteleft Algorithm \ref{algo2}\textquoteright.  
\begin{algorithm}[H]
\caption{Determination of equality of cyclotomic numbers of order $l^{2}$.}\label{algo1}
\begin{algorithmic}[1]
\vspace{.5cm}
\State START
\State Declare Integer variable $p,q,r,l,i,j,i1,j1,a1,b1,flag$.
\State INPUT $l$
\If{$l$ is not a prime or less than $3$}
\State goto $3$
\Else
\State $e=l^{2}$
\EndIf
\State Declare an array of size $e\times e$, where each element of array is $2$ tuple structure (i.e. ordered pair of $(a,b)$, where $a$ and $b$ are integers).
\State INPUT $q$ 
\For{$p=$ all prime number within $2$ to $q$}
\For{all $r$ within $1$ to $q$}
\If{$q$ is not equal to $p^{r}$}
\State goto $10$
\ElsIf{$(q-1)\%e==0$}
\State goto $20$
\Else
\State goto $10$
\EndIf
\EndFor
\EndFor
\algstore{c}
\end{algorithmic}
\end{algorithm}	
\newpage
\begin{algorithm}
\begin{algorithmic}[1]
\algrestore{c}
\vspace{.5cm}
\For {$i1=0$ to $e-1$}
\For {$j1=0$ to $e-1$}
\State int $a1=$ value of $a$ at current array index i.e. $a1=arr[i1][j1].a$
\State int $b1=$ value of $b$ at current array index i.e. $b1=arr[i1][j1].b$
\State set flag of current element of array to $0$ i.e. lock the element which is updated once $arr[i1][j1].flag=0$ 
\EndFor
\EndFor
\For {$i=0$ to $e-1$}
\For {$j=0$ to $e-1$}
\If{flag is $1$ i.e. element has not been updated yet}
\If {$a$ is equal to $b1\%e$ (when $b1\geq 0$) and $b1+e$ (when $b1<0$) AND $b$ is equal to $a1\%e$ (when $a1\geq 0$) and $a1+e$ (when $a1<0$)}
\State $a=a1$, $b=b1$ and $flag=0$
\EndIf
\If {$a$ is equal to $(a1-b1)\%e$ (when $(a1-b1)\geq 0$) and $(a1-b1)+e$ (when $(a1-b1)<0$) AND $b$ is equal to $(-b1)\%e$ (when $(-b1)\geq 0$) and $(-b1)+e$ (when $(-b1)<0$)}
\State $a=a1$, $b=b1$ and $flag=0$
\EndIf
\If {$a$ is equal to $(b1-a1)\%e$ (when $(b1-a1)\geq 0$) and $(b1-a1)+e$ (when $(b1-a1)<0$) AND $b$ is equal to $(-a1)\%e$ (when $(-a1)\geq 0$) and $(-a1)+e$ (when $(-a1)<0$)}
\State $a=a1$, $b=b1$ and $flag=0$
\EndIf
\If {$a$ is equal to $(-a1)\%e$ (when $(-a1)\geq 0$) and $(-a1)+e$ (when $(-a1)<0$) AND $b$ is equal to $(b1-a1)\%e$ (when $(b1-a1)\geq 0$) and $(b1-a1)+e$ (when $(b1-a1)<0$)}
\State $a=a1$, $b=b1$ and $flag=0$
\EndIf
\If {$a$ is equal to $(-b1)\%e$ (when $(-b1)\geq 0$) and $(-b1)+e$ (when $(-b1)<0$) AND $b$ is equal to $(a1-b1)\%e$ (when $(a1-b1)\geq 0$) and $(a1-b1)+e$ (when $(a1-b1)<0$)}
\State $a=a1$, $b=b1$ and $flag=0$
\EndIf
\EndIf
\EndFor
\EndFor	
\end{algorithmic}
\end{algorithm}	
\begin{algorithm}[H]
\caption{Determination of equality of cyclotomic numbers of order $2l^{2}$.}\label{algo2}
\begin{algorithmic}[1]
\vspace{.5cm}
\State START
\State Declare Integer variable $p,q,r,l,i,j,k,i1,j1,a1,b1,flag$.
\State INPUT $l$
\If{$l$ is not a prime or less than $3$}
\State goto $3$
\Else
\State $e=2l^{2}$
\EndIf
\State Declare an array of size $e\times e$, where each element of array is $2$ tuple structure (i.e. ordered pair of $(a,b)$, where $a$ and $b$ are integers).
\State INPUT $q$ 
\For{$p=$ all prime number within $2$ to $q$}
\For{all $r$ within $1$ to $q$}
\If{$q$ is not equal to $p^{r}$}
\State goto $10$
\ElsIf{$(q-1)\%e==0$}
\State $k=(q-1)/e$
\Else
\State goto $10$
\If{ $k$ even}
\State goto $25$
\Else
\State goto $53$
\EndIf
\EndIf
\EndFor
\EndFor
\For {$i1=0$ to $e-1$}
\For {$j1=0$ to $e-1$}
\State int $a1=$ value of $a$ at current array index i.e. $a1=arr[i1][j1].a$
\State int $b1=$ value of $b$ at current array index i.e. $b1=arr[i1][j1].b$
\State set flag of current element of array to $0$ i.e. lock the element which is updated once $arr[i1][j1].flag=0$ 
\EndFor
\EndFor
\For {$i=0$ to $e-1$}
\For {$j=0$ to $e-1$}
\If{flag is $1$ i.e. element has not been updated yet}
\If {$a$ is equal to $b1\%e$ (when $b1\geq 0$) and $b1+e$ (when $b1<0$) AND $b$ is equal to $a1\%e$ (when $a1\geq 0$) and $a1+e$ (when $a1<0$)}
\State $a=a1$, $b=b1$ and $flag=0$
\EndIf
\If {$a$ is equal to $(a1-b1)\%e$ (when $(a1-b1)\geq 0$) and $(a1-b1)+e$ (when $(a1-b1)<0$) AND $b$ is equal to $(-b1)\%e$ (when $(-b1)\geq 0$) and $(-b1)+e$ (when $(-b1)<0$)}
\State $a=a1$, $b=b1$ and $flag=0$
\EndIf
\algstore{a}
\end{algorithmic}
\end{algorithm}
\newpage
\begin{algorithm}[H]
\begin{algorithmic}[1]
\algrestore{a}
\vspace{.5cm}
\If {$a$ is equal to $(b1-a1)\%e$ (when $(b1-a1)\geq 0$) and $(b1-a1)+e$ (when $(b1-a1)<0$) AND $b$ is equal to $(-a1)\%e$ (when $(-a1)\geq 0$) and $(-a1)+e$ (when $(-a1)<0$)}
\State $a=a1$, $b=b1$ and $flag=0$
\EndIf
\If {$a$ is equal to $(-a1)\%e$ (when $(-a1)\geq 0$) and $(-a1)+e$ (when $(-a1)<0$) AND $b$ is equal to $(b1-a1)\%e$ (when $(b1-a1)\geq 0$) and $(b1-a1)+e$ (when $(b1-a1)<0$)}
\State $a=a1$, $b=b1$ and $flag=0$
\EndIf
\If {$a$ is equal to $(-b1)\%e$ (when $(-b1)\geq 0$) and $(-b1)+e$ (when $(-b1)<0$) AND $b$ is equal to $(a1-b1)\%e$ (when $(a1-b1)\geq 0$) and $(a1-b1)+e$ (when $(a1-b1)<0$)}
\State $a=a1$, $b=b1$ and $flag=0$
\EndIf
\EndIf
\EndFor
\EndFor	
\For {$i1=0$ to $e-1$}
\For {$j1=0$ to $e-1$}
\State int $a1=$ value of $a$ at current array index i.e. $a1=arr[i1][j1].a$
\State int $b1=$ value of $b$ at current array index i.e. $b1=arr[i1][j1].b$
\State set flag of current element of array to $0$ i.e. lock the element which is updated once $arr[i1][j1].flag=0$ 
\EndFor
\EndFor
\For {$i=0$ to $e-1$}
\For {$j=0$ to $e-1$}
\If{flag is $1$ i.e. element has not been updated yet}
\If {$a$ is equal to $(b1+l^{2})\%e$ (when $(b1+l^{2})\geq 0$) and $(b1+l^{2})+e$ (when $(b1+l^{2})<0$) AND $b$ is equal to $(a1+l^{2})\%e$ (when $(a1+l^{2})\geq 0$) and $(a1+l^{2})+e$ (when $(a1+l^{2})<0$)}
\State $a=a1$, $b=b1$ and $flag=0$
\EndIf
\If {$a$ is equal to $(l^{2}+a1-b1)\%e$ (when $(l^{2}+a1-b1)\geq 0$) and $(l^{2}+a1-b1)+e$ (when $(l^{2}+a1-b1)<0$) AND $b$ is equal to $(-b1)\%e$ (when $(-b1)\geq 0$) and $(-b1)+e$ (when $(-b1)<0$)}
\State $a=a1$, $b=b1$ and $flag=0$
\EndIf
\If {$a$ is equal to $(l^{2}+b1-a1)\%e$ (when $(l^{2}+b1-a1)\geq 0$) and $(l^{2}+b1-a1)+e$ (when $(l^{2}+b1-a1)<0$) AND $b$ is equal to $(l^{2}-a1)\%e$ (when $(l^{2}-a1)\geq 0$) and $(l^{2}-a1)+e$ (when $(l^{2}-a1)<0$)}
\State $a=a1$, $b=b1$ and $flag=0$
\EndIf
\If {$a$ is equal to $(-a1)\%e$ (when $(-a1)\geq 0$) and $(-a1)+e$ (when $(-a1)<0$) AND $b$ is equal to $(b1-a1)\%e$ (when $(b1-a1)\geq 0$) and $(b1-a1)+e$ (when $(b1-a1)<0$)}
\State $a=a1$, $b=b1$ and $flag=0$
\EndIf
\algstore{b}
\end{algorithmic}
\end{algorithm}
\newpage
\begin{algorithm}
\begin{algorithmic}
\algrestore{b}
\vspace{.5cm}
\If {$a$ is equal to $(l^{2}-b1)\%e$ (when $(l^{2}-b1)\geq 0$) and $(l^{2}-b1)+e$ (when $(l^{2}-b1)<0$) AND $b$ is equal to $(a1-b1)\%e$ (when $(a1-b1)\geq 0$) and $(a1-b1)+e$ (when $(a1-b1)<0$)}
\State $a=a1$, $b=b1$ and $flag=0$
\EndIf
\EndIf
\EndFor
\EndFor	
\end{algorithmic}
\end{algorithm}

The above algorithms demonstrate that for the determination of Jacobi sums of orders $2l^{2}$ and $l^{2}$ with prime $l\geq 5$, it is adequate to determine $2l^2+{(2l^2-1)(2l^2-2)}/6$ and $l^2+{(l^2-1)(l^2-2)}/6$ cyclotomic numbers of orders $2l^2$ and $l^{2}$ respectively. However for $l=3$, it is sufficient to ascertain $64$ and $19$ cyclotomic numbers of orders $2l^2$ and $l^{2}$ respectively. Thus, it reduces the complexity of order $l^2$ to $l^{4}-\{l^{2}+{(l^{2}-1)(l^{2}-2)}/6\}$ for $l > 3$ and $l^{4}-19$ for $l = 3$ and of order $2l^{2}$ to $4l^{4}- \{2l^{2}+{(2l^{2}-1)(2l^{2}-2)}/6\}$  for $l > 3$ and $4l^{4}-64$ for $l = 3$. So, it could be easily observed that, for a large value of $l$, complexity for the determination of Jacobi sums reduces drastically.

\begin{table}[h!]
\centering
{\tiny{%
\begin{tabular}{@{}|c|c|c|c|@{}}
\toprule
\multirow{2}{*}{\begin{tabular}[c]{@{}c@{}}Corresponding \\ value of $l$\end{tabular}} & \multicolumn{3}{c|}{Order $l^2$} \\ \cmidrule(l){2-4} 
 & \begin{tabular}[c]{@{}c@{}}Required number of \\ cyclotomic numbers \\ need to determine\end{tabular} & \begin{tabular}[c]{@{}c@{}}Actual number of \\ cyclotomic numbers \\ need to determine\end{tabular} & \begin{tabular}[c]{@{}c@{}}Number of reduced\\ computations for \\ Jacobi sums\end{tabular} \\ \midrule
$l=3$ & 81 & 19 & 62 \\ \midrule
$l=5$ & 625 & 117 & 508 \\ \midrule
$l=7$ & 2401 & 425 & 1976 \\ \midrule
$l=11$ & 14641 & 2501 & 12140 \\ \midrule
$l=13$ & 28561 & 4845 & 23716 \\ \bottomrule
\end{tabular}%
}}
\vspace{0.1cm}
\caption{Complexity comparison of order $l^2$}
\label{tab:my-table}
\end{table}
\begin{table}[h]
\centering
{\tiny{%
\begin{tabular}{@{}|c|c|c|c|@{}}
\toprule
\multirow{2}{*}{\begin{tabular}[c]{@{}c@{}}Corresponding \\ value of $l$\end{tabular}} & \multicolumn{3}{c|}{Order $2l^2$} \\ \cmidrule(l){2-4} 
 & \begin{tabular}[c]{@{}c@{}}Required number of \\ cyclotomic numbers \\ need to determine\end{tabular} & \begin{tabular}[c]{@{}c@{}}Actual number of \\ cyclotomic numbers \\ need to determine\end{tabular} & \begin{tabular}[c]{@{}c@{}}Number of reduced\\ computations for \\ Jacobi sums\end{tabular} \\ \midrule
$l=3$ & 324 & 64 & 260 \\ \midrule
$l=5$ & 2500 & 442 & 2058 \\ \midrule
$l=7$ & 9604 & 1650 & 7954 \\ \midrule
$l=11$ & 58564 & 9882 & 48682 \\ \midrule
$l=13$ & 114244 & 19210 & 95034 \\ \bottomrule
\end{tabular}%
}}
\vspace{0.1cm}
\caption{Complexity comparison of order $2l^2$}
\label{tab:my-table 1}
\end{table}
\newpage
As illustrated in tables \ref{tab:my-table} and \ref{tab:my-table 1}, for $l=3$, naively summing the definition, we need to evaluate 81 and 324 numbers of cyclotomic numbers of orders $l^2$ and $2l^2$ respectively to determine the Jacobi sums of respective orders. But algorithms \ref{algo1} and \ref{algo2} validate that it is sufficient to evaluate only 19 and 64 numbers of cyclotomic numbers of order $9$ and $18$ respectively. Thus complexity of implementing algorithms for Jacobi sums reduces by 62 and 260 respectively. Also one can observe that, as the value of $l$ increases, the corresponding complexity reduces drastically. Consequently, efficiency of our implemented algorithms \ref{algo3}, \ref{algo4}, \ref{algo5} increases as value of $l$ rises. 

\section{Expressions for Jacobi sums in terms of cyclotomic numbers}\label{section4}
Here, we present the expressions for Jacobi sums of orders $l^2$ and $2l^2$, $l$ an odd prime, in terms of the minimum number of cyclotomic numbers of orders $l^2$ and $2l^2$ respectively.
\begin{theorem}\label{thm1}
Let $l$ be an odd prime and $p$ a prime. For some positive integers $r$ and $k$, let $q=p^r=l^2k+1$.\\
Then for $l\geq 5$
\begin{align}\label{exp5}
J_{l^{2}}(1,n)&=\sum_{a=0}^{l^{2}-1}\sum_{b=0}^{l^{2}-1}(a,b)_{l^{2}}\zeta_{l^{2}}^{a+bn}\nonumber\\ &= (0,0)_{l^{2}}+\sum_{b=1}^{l^{2}-1}(0,b)_{l^{2}}(\zeta_{l^{2}}^{bn}+\zeta_{l^{2}}^{b}+\zeta_{l^{2}}^{-b(n+1)})+\bigg\{\sum_{b=2}^{l^{2}-2}(1,b)_{l^{2}}+\sum_{b=4}^{l^{2}-3}(2,b)_{l^{2}}\nonumber\\ &\quad+\sum_{b=6}^{l^{2}-4}(3,b)_{l^{2}} 
+\dots+\sum_{b=(2l^{2}-2)/3}^{(2l^{2}-2)/3}((l^{2}-1)/3,b)_{l^{2}}\bigg\}\bigg(\zeta_{l^{2}}^{an+b}+\zeta_{l^{2}}^{a+bn}\nonumber\\ & \quad+\zeta_{l^{2}}^{a-b(n+1)}+\zeta_{l^{2}}^{an-b(n+1)}+\zeta_{l^{2}}^{bn-a(n+1)}  +\zeta_{l^{2}}^{b-a(n+1)}\bigg). 
\end{align}
and for $l=3$
\begin{align}\label{exp6}
J_{9}(1,n)&=\sum_{a=0}^{8}\sum_{b=0}^{8}(a,b)_{9}\zeta_{9}^{a+bn}\nonumber\\ &= (0,0)_{9}+ (3,6)_{9}(\zeta_{9}^{3+6n}+\zeta_{9}^{6+3n})+\sum_{b=1}^{8}(0,b)_{9}(\zeta_{9}^{bn}+\zeta_{9}^{b}+\zeta_{9}^{-b(n+1)})\nonumber\\&\quad+\bigg\{\sum_{b=2}^{7}(1,b)_{9}+\sum_{b=4}^{6}(2,b)_{9}\bigg\} \bigg(\zeta_{9}^{an+b}+\zeta_{9}^{a+bn}+\zeta_{9}^{a-b(n+1)}+\zeta_{9}^{an-b(n+1)}\nonumber\\ &\quad+\zeta_{9}^{bn-a(n+1)} +\zeta_{9}^{b-a(n+1)}\bigg). 
\end{align}
\end{theorem}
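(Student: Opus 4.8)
The plan is to start from equation (\ref{01}), which already gives
$J_{l^{2}}(1,n)=\sum_{a=0}^{l^{2}-1}\sum_{b=0}^{l^{2}-1}(a,b)_{l^{2}}\zeta_{l^{2}}^{a+bn}$, and then to collapse this double sum into the claimed shorter expression by grouping together those index pairs $(a,b)$ whose cyclotomic numbers are forced to be equal. The equalities available are exactly the ones recalled in Section \ref{section2}: since here $e=l^{2}$ is odd, $k=(q-1)/l^{2}$ is automatically even, so (\ref{2.1}) gives $(a,b)_{l^{2}}=(b,a)_{l^{2}}$, and combined with the elementary relation $(a,b)_{e}=(e-a,b-a)_{e}$ this produces an orbit of size (generically) six: $(a,b)$, $(b,a)$, $(a-b,-b)$, $(b-a,-a)$, $(-a,b-a)$, $(-b,a-b)$, all taken mod $l^{2}$. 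Collecting the six corresponding monomials $\zeta_{l^{2}}^{a+bn}$ over such an orbit produces precisely the six-term bracket $\zeta_{l^{2}}^{an+b}+\zeta_{l^{2}}^{a+bn}+\zeta_{l^{2}}^{a-b(n+1)}+\zeta_{l^{2}}^{an-b(n+1)}+\zeta_{l^{2}}^{bn-a(n+1)}+\zeta_{l^{2}}^{b-a(n+1)}$ appearing in (\ref{exp5}); this is the computational heart of the argument and just amounts to substituting each orbit representative into $a+bn\bmod l^{2}$ and simplifying the exponent.

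Next I would account for the degenerate orbits, i.e. the index pairs where the six-element orbit collapses. The row $a=0$ is special: the orbit of $(0,b)$ is $\{(0,b),(b,0),(-b,-b)\}$, of size three, contributing the three-term factor $\zeta_{l^{2}}^{bn}+\zeta_{l^{2}}^{b}+\zeta_{l^{2}}^{-b(n+1)}$ that multiplies $(0,b)_{l^{2}}$ in the stated formula, and $(0,0)$ is a fixed point contributing the lone term $(0,0)_{l^{2}}$. So the plan is: peel off $(0,0)_{l^{2}}$, then the sum $\sum_{b=1}^{l^{2}-1}(0,b)_{l^{2}}(\zeta_{l^{2}}^{bn}+\zeta_{l^{2}}^{b}+\zeta_{l^{2}}^{-b(n+1)})$, and then sum the six-term brackets over a transversal of the remaining orbits with $a\geq 1$. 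The range of summation $\sum_{b=2}^{l^{2}-2}(1,b)_{l^{2}}+\sum_{b=4}^{l^{2}-3}(2,b)_{l^{2}}+\cdots+\sum_{b=(2l^{2}-2)/3}^{(2l^{2}-2)/3}((l^{2}-1)/3,b)_{l^{2}}$ is exactly such a transversal — each bracket picks out one representative per orbit — and I would verify this by checking that as $(a,b)$ runs over this set the six images cover every pair with $1\le a\le l^{2}-1$ exactly once, using $\gcd(3,l^{2})=1$ for $l\ge 5$ to ensure the triangular index pattern closes up correctly with the stated endpoints $(a,b)=((l^{2}-1)/3,(2l^{2}-2)/3)$.

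The case $l=3$ in (\ref{exp6}) is handled the same way but with one extra degenerate orbit to isolate: when $l=3$, $e=9$ and the pair $(3,6)$ satisfies $3\cdot 2\equiv 6$, $6\cdot 2\equiv 12\equiv 3 \pmod 9$, so its orbit has size two, $\{(3,6),(6,3)\}$, and must be pulled out separately as the term $(3,6)_{9}(\zeta_{9}^{3+6n}+\zeta_{9}^{6+3n})$; the remaining pairs with $a\ge 1$, $a\ne 3$ again split into six-element orbits with transversal $\sum_{b=2}^{7}(1,b)_{9}+\sum_{b=4}^{6}(2,b)_{9}$, which I would confirm by direct enumeration since $e=9$ is small.

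I expect the main obstacle to be bookkeeping rather than anything conceptual: one has to be careful that the proposed transversals (the nested sums with their specific lower and upper limits) hit each orbit exactly once and never an extra degenerate pair, and that no orbit is double-counted where two of the six exponents happen to coincide modulo $l^{2}$. Verifying this cleanly for general $l\ge 5$ — in particular pinning down that the only collapses are the $a=0$ row and (for $l=3$) the pair $(3,6)$, and that this is exactly why the formula for $l=3$ needs its extra term — is the delicate part; everything else is the routine exponent substitution sketched above, and the count of required cyclotomic numbers, $l^{2}+(l^{2}-1)(l^{2}-2)/6$, falls out as $1+(l^{2}-1)$ from the first two pieces plus $(l^{2}-1)(l^{2}-2)/6$ six-element orbits among the $a\ge 1$ pairs.
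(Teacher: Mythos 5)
Your proposal is correct and follows essentially the same route as the paper: both arguments observe that $k=(q-1)/l^{2}$ is automatically even, generate the six relations $(a,b)_{l^{2}}=(b,a)_{l^{2}}=(a-b,-b)_{l^{2}}=\cdots$ from the two basic symmetries, partition the index pairs into the resulting orbits (singleton $(0,0)$, three-element classes on the $a=0$ row, six-element classes for the rest, plus the extra two-element class $\{(3,6),(6,3)\}$ when $l=3$), and substitute into $\sum_{a,b}(a,b)_{l^{2}}\zeta_{l^{2}}^{a+bn}$. If anything, your verification that the stated nested sums form a genuine transversal and that the only orbit collapses are the ones listed is more explicit than the paper's, which simply asserts the class structure.
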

\begin{proof}
Cyclotomic numbers $(a, b)_{l^{2}}$ of order $l^{2}$  over $\mathbb{F}_{q}$ is defined in \cite{Shirolkar1} as:
\begin{center}
	$(a,b)_{l^{2}}:=\#\{v\in\mathbb{F}_q\setminus \{0,-1\}\mid {\rm ind}_{\gamma}v\equiv a \pmod {l^{2}} ,  {\ \rm ind}_{\gamma}(v+1)\equiv b \pmod {l^{2}}\}.$
\end{center}
In \cite{Shirolkar1}, it was proved that $(a, b)_{e}$ has the following properties:  
\begin{equation} \label{equ11}
(a, b)_{e} \quad = \quad (e-a, b-a)_{e}, \qquad (a, b)_{e} \quad = \quad (b, a)_{e}, \quad 2|k\ \text{or}\ q=2^{r}
\end{equation} 
and 
\begin{equation}\label{equ12}
(a, b)_{e} \quad = \quad (b+e/2, a+e/2)_{e} , \qquad (a, b)_{e} \quad = \quad (e-a, b-a)_{e}, \quad otherwise.
\end{equation} 
For $q=l^{2}k+1$ for some positive integer $k$, it is natural to see that $k$ is always even. Now  to permute only (\ref{equ11}), one gets
\begin{equation} \label{equ13}
(a,b)_{l^{2}}=(b,a)_{l^{2}}=(a-b,-b)_{l^{2}}=(b-a,-a)_{l^{2}}=(-a,b-a)_{l^{2}}=(-b,a-b)_{l^{2}}.
\end{equation}
We know that $(a,b)_{l^{2}}$ and $J_{l^{2}}(1,n)$ are well connected  by the following relations:
\begin{equation} \label{equ14}
J_{l^{2}}(1,n)=\sum_{a=0}^{l^2-1}\sum_{b=0}^{l^2-1}(a,b)_{l^{2}}\zeta_{l^{2}}^{a+bn}
\end{equation}
Thus by (\ref{equ13}), cyclotomic numbers $(a,b)_{l^{2}}$ of order $l^{2}$ partition into group of classes. For prime $l\geq 5$, cyclotomic numbers $(a,b)_{l^{2}}$ of order $l^{2}$ forms classes of singleton, three and six elements. $(0,0)_{l^{2}}$ form singleton class, $(-a,0)_{l^{2}}$, $(a,a)_{l^{2}}$, $(0,-a)_{l^{2}}$ forms a classes of three elements for every $1\leq a\leq {l^{2}}-1 \pmod {{l^{2}}}$ and rest of the cyclotomic numbers forms classes of six elements. For $l=3$ there are classes of singleton, second, three and six elements. The exception is $(6,3)_{9}=(3,6)_{9}$ which is grouped into a class of two elements. Hence expression (\ref{exp5}) and (\ref{exp6}) directly follows by the relation (\ref{equ14}).
\end{proof}
The following result gives an analogous expression for $l=3$. 
\begin{theorem}\label{thm2}
	Let $p$ be a prime. For some positive integers $r$ and $k$, let $q=p^r=18k+1$. 
	
	Then for $2|k$ or $q=2^{r}$,
	\begin{align}\label{exp1}
	J_{18}(1,n)&=\sum_{a=0}^{17}\sum_{b=0}^{17}(a,b)_{18}\zeta_{18}^{a+bn}\nonumber \\ &= (0,0)_{18}+(6,12)_{18}(\zeta_{18}^{6+12n}+\zeta_{18}^{12+6n})+\sum_{b=1}^{17}(0,b)_{18}(\zeta_{18}^{bn}\nonumber\\ & \quad+\zeta_{18}^{b}+\zeta_{18}^{-b(n+1)}) +\bigg\{ \sum_{b=2}^{16}(1,b)_{18}+\sum_{b=4}^{15}(2,b)_{18} +\sum_{b=6}^{14}(3,b)_{18}\nonumber\\ & \quad+\sum_{b=8}^{13}(4,b)_{18} +\sum_{b=10}^{12}(5,b)_{18}\bigg\}\bigg\{\zeta_{18}^{an+b}+\zeta_{18}^{a+bn} +\zeta_{18}^{a-b(n+1)}\nonumber \\ & \quad+\zeta_{18}^{an-b(n+1)}+\zeta_{18}^{bn-a(n+1)}+\zeta_{18}^{b-a(n+1)}\bigg\}. 
	\end{align}
	and for $2\nmid k$ and $q\neq 2^{r}$, 
	\begin{align}\label{exp2}
	J_{18}(1,n)&=\sum_{a=0}^{17}\sum_{b=0}^{17}(a,b)_{18}\zeta_{18}^{a+bn} \nonumber \\ &= (0,9)_{18}\zeta_{18}^{9n}+(6,3)_{18}(\zeta_{18}^{6+3n}+\zeta_{18}^{12+15n})+\bigg\{\sum_{b=0}^{8}(0,b)_{18}+\sum_{b=10}^{17}(0,b)_{18}\bigg\}\nonumber\\ & \quad\bigg\{\zeta_{18}^{bn}+\zeta_{18}^{9(n+1)+b}+\zeta_{18}^{9-b(n+1)}\bigg\}+\bigg\{\sum_{b=0}^{8}(1,b)_{18} +\sum_{b=12}^{17}(1,b)_{18} +\sum_{b=0}^{7}(2,b)_{18}\nonumber\\ & \quad+\sum_{b=14}^{17}(2,b)_{18} +\sum_{b=0}^{6}(3,b)_{18} +\sum_{b=16}^{17}(3,b)_{18} +\sum_{b=0}^{5}(4,b)_{18} +\sum_{b=1}^{3}(5,b)_{18}\bigg\}\nonumber \\ & \quad\bigg\{\zeta_{18}^{a+bn}+\zeta_{18}^{an+b+9(n+1)}+\zeta_{18}^{9+a-b(n+1)}+\zeta_{18}^{(n+1)9+b-a(n+1)}+\zeta_{18}^{bn-a(n+1)}\nonumber \\ & \quad+\zeta_{18}^{9+an-b(n+1)}\bigg\}. 
	\end{align}
\end{theorem}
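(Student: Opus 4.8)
The plan is to mimic the structure of the proof of Theorem~\ref{thm1}, but now tracking the full orbit structure of the cyclotomic numbers $(a,b)_{18}$ under \emph{both} families of relations in \eqref{equ11} and \eqref{equ12}, separated according to the parity of $k$. First I would record, exactly as for order $l^2$, the connecting identity
\begin{equation*}
J_{18}(1,n)=\sum_{a=0}^{17}\sum_{b=0}^{17}(a,b)_{18}\,\zeta_{18}^{a+bn},
\end{equation*}
so that everything reduces to collecting the $324$ terms into orbits on which $(a,b)_{18}$ is constant. In the case $2\mid k$ (or $q=2^r$), the governing relations are $(a,b)_{18}=(b,a)_{18}=(18-a,b-a)_{18}$, which generate the same six-element group action $\langle(a,b)\mapsto(b,a),\ (a,b)\mapsto(-a,b-a)\rangle\cong S_3$ already used in Theorem~\ref{thm1} with $e=l^2$ replaced by $e=18$. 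So the first half, \eqref{exp1}, is essentially the $e=18$ analogue of \eqref{exp6}: I would identify the singleton orbit $\{(0,0)_{18}\}$, the exceptional two-element orbit $\{(6,12)_{18},(12,6)_{18}\}$ coming from the fixed points of the $S_3$-action modulo $18$ (the solutions of $2a\equiv 0$, $2b\equiv 0$ with $a\ne b$, i.e. $\{a,b\}=\{6,12\}$ up to the relations — note $9$ is excluded here because $(9,9)$ would be a different degenerate case and in fact $6+12\equiv 0$), and then check that the generic orbits have six elements, giving the displayed index ranges $b$ from $2$ to $16$ for $a=1$, from $4$ to $15$ for $a=2$, and so on down to $b\in\{10,11,12\}$ for $a=5$; summing the six characters $\zeta_{18}^{an+b}+\zeta_{18}^{a+bn}+\cdots$ over each generic orbit then yields \eqref{exp1}.

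For the harder case $2\nmid k$ and $q\ne 2^r$, the relations become $(a,b)_{18}=(b+9,a+9)_{18}=(18-a,b-a)_{18}$; here the ``swap'' is replaced by the twisted swap $(a,b)\mapsto(b+9,a+9)$, which still has order two but no longer fixes the diagonal, and composing with $(a,b)\mapsto(-a,b-a)$ again generates a six-element group. The plan is to work out the orbits of this twisted action on $(\mathbb{Z}/18)^2$: the degenerate orbits are now $\{(0,9)_{18}\}$ (fixed by everything: $(0,9)\mapsto(9+9,0+9)=(0,9)$ under the twisted swap and $(0,9)\mapsto(0,9)$ under $(a,b)\mapsto(-a,b-a)$) and the two-element orbit $\{(6,3)_{18},(12,15)_{18}\}$ — one checks $(6,3)\mapsto(3+9,6+9)=(12,15)$ and that the third relation sends $(6,3)\mapsto(12,-3)=(12,15)$ as well, so the orbit closes at size two. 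The remaining orbits have six elements; the index ranges in \eqref{exp2} (e.g. $b\in\{0,\dots,8\}\cup\{10,\dots,17\}$ for $a=0$, $b\in\{0,\dots,8\}\cup\{12,\dots,17\}$ for $a=1$, down to $b\in\{1,2,3\}$ for $a=5$) are exactly a choice of orbit representatives, and the six exponents $\zeta_{18}^{a+bn}+\zeta_{18}^{an+b+9(n+1)}+\zeta_{18}^{9+a-b(n+1)}+\cdots$ are obtained by applying the six group elements to the seed term $\zeta_{18}^{a+bn}$ and using $\chi_{18}(\gamma^9)=\zeta_{18}^9=-1$ to rewrite the shifts by $9$. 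Substituting these consolidated orbit sums back into the double sum gives \eqref{exp2}.

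The main obstacle is the bookkeeping in the twisted case: one must verify that the chosen representative ranges genuinely hit every orbit exactly once (no double counting across the two disjoint $b$-intervals, and correct handling of the boundary values where an orbit could accidentally collapse to size two or three), and that the twisted relations are applied consistently so that the six character exponents in \eqref{exp2} are precisely the images of $\zeta_{18}^{a+bn}$ under the group — in particular keeping track of where a factor $\zeta_{18}^{9}=-1$ or $\zeta_{18}^{9n}$, $\zeta_{18}^{9(n+1)}$ appears. I would organize this by first listing the group elements explicitly as maps $(a,b)\mapsto(\cdot,\cdot)$ together with their effect on the exponent $a+bn$, then tabulating orbit sizes by counting fixed points of each nontrivial element modulo $18$ (a short Burnside-type check), and only then reading off the index ranges; the equality relations output by Algorithm~\ref{algo2} serve as the independent computational confirmation that the orbit decomposition — hence \eqref{exp1} and \eqref{exp2} — is correct and minimal.
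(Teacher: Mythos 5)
Your proposal is correct and follows essentially the same route as the paper: both decompose the $18\times18$ array of cyclotomic numbers into orbits under the permuted relations $(a,b)_{18}=(b,a)_{18}=(a-b,-b)_{18}=\cdots$ (even $k$) and $(a,b)_{18}=(b+9,a+9)_{18}=(9+a-b,-b)_{18}=\cdots$ (odd $k$), identify the singleton, two-element, three-element and six-element classes, and regroup the double sum $\sum_{a,b}(a,b)_{18}\zeta_{18}^{a+bn}$ accordingly. The only quibble is that the classes with $a=0$ (resp.\ the $(0,b)$, $(b+9,9)$, $(9-b,-b)$ families) are three-element orbits rather than six-element ones, which is why they receive only three exponents in \eqref{exp1} and \eqref{exp2}; your verification of the exceptional orbits $\{(6,12),(12,6)\}$, $\{(0,9)\}$ and $\{(6,3),(12,15)\}$ matches the paper exactly.
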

\begin{proof}
We recall the following from \cite{Helal1}:
\begin{center}
	$(a,b)_{18}:=\#\{v\in\mathbb{F}_q\setminus \{0,-1\}\mid {\rm ind}_{\gamma}v\equiv a \pmod {18} ,  {\ \rm ind}_{\gamma}(v+1)\equiv b \pmod {18}\}.$
\end{center}
The following properties were derived in \cite{Helal1}:
\begin{equation} \label{equ1}
(a, b)_{18} \quad = \quad (18-a, b-a)_{18}, \qquad (a, b)_{18} \quad = \quad (b, a)_{18}, \quad 2|k\ \text{or}\ q=2^{r},
\end{equation} 
and 
\begin{equation}\label{equ2}
(a, b)_{18} \quad = \quad (b+9, a+9)_{18} , \qquad (a, b)_{18} \quad = \quad (18-a, b-a)_{18}, \quad otherwise.
\end{equation} 
We permute (\ref{equ1}) and (\ref{equ2}) to get
\begin{equation} \label{equ3}
(a,b)_{18}=(b,a)_{18}=(a-b,-b)_{18}=(b-a,-a)_{18}=(-a,b-a)_{18}=(-b,a-b)_{18}
\end{equation}
and
\begin{align} \label{equ4}
\nonumber &(a,b)_{18}=(b+9,a+9)_{18}=(9+a-b,-b)_{18}=(9+b-a,9-a)_{18}\\ & =(-a,b-a)_{18} =(9-b,a-b)_{18}
\end{align}
respectively.
\\
It is easy to see that $(a,b)_{18}$ and $J_{18}(1,n)$ are well-connected  by the following:
\begin{equation} \label{equ5}
J_{18}(1,n)=\sum_{a=0}^{17}\sum_{b=0}^{17}(a,b)_{18}\zeta_{18}^{a+bn}.
\end{equation}
Thus by (\ref{equ3}) and (\ref{equ4}), cyclotomic numbers $(a,b)_{18}$ of order $18$ partition into classes. If $2|k$ or $q=2^{r}$, (\ref{equ3}) gives classes of singleton, two, three and six elements. $(0,0)_{18}$ forms a singleton class, $(-a,0)_{18}$, $(a,a)_{18}$, $(0,-a)_{18}$ forms classes of three elements for every $1\leq a\leq 17 \pmod {18}$, $(6,12)_{18}=(12,6)_{18}$ which is grouped into a class of two elements and rest of the cyclotomic numbers forms classes of six elements. Hence expression (\ref{exp1}) directly follows from the relation (\ref{equ5}).
\\
Now if neither $2|k$ nor $q=2^{r}$, (\ref{equ4}) gives classes of singleton, two, three and six elements. $(0,9)_{18}$ forms a singleton class, $(0,a)_{18}$, $(a+9,9)_{18}$, $(9-a,-a)_{18}$ forms classes of three elements for every $0\leq a\neq 9 \leq 17 \pmod {18}$, $(6,3)_{18}=(12,15)_{18}$ which is grouped into a class of two elements and rest of the cyclotomic numbers forms classes of six elements. Hence expression (\ref{exp2}) directly follows by the relation (\ref{equ5}).
\end{proof}
\vspace{0.3cm}
\begin{remark}
$\#\bigg\{(0,0)_{18}+(6,12)_{18}+\sum_{b=1}^{17}(0,b)_{18}+ \sum_{b=2}^{16}(1,b)_{18}+\sum_{b=4}^{15}(2,b)_{18} +\sum_{b=6}^{14}(3,b)_{18}+\sum_{b=8}^{13}(4,b)_{18} +\sum_{b=10}^{12}(5,b)_{18}\bigg\}=\#\bigg\{(0,9)_{18}+(6,3)_{18}+\sum_{b=0}^{8}(0,b)_{18}+\sum_{b=10}^{17}(0,b)_{18}+\sum_{b=0}^{8}(1,b)_{18} +\sum_{b=12}^{17}(1,b)_{18} +\sum_{b=0}^{7}(2,b)_{18}+\sum_{b=14}^{17}(2,b)_{18} +\sum_{b=0}^{6}(3,b)_{18} +\sum_{b=16}^{17}(3,b)_{18} +\sum_{b=0}^{5}(4,b)_{18} +\sum_{b=1}^{3}(5,b)_{18}\bigg\}$.
\end{remark}
\vspace{0.3cm}
\begin{remark}
If $2|k$ or $q=2^{r}$, then the sum of all $(a,b)_{18}$, $0\leq a,b \leq 17$ is equal to $\bigg\{(0,0)_{18}+2(6,12)_{18}+3\sum_{b=1}^{17}(0,b)_{18}+6\bigg( \sum_{b=2}^{16}(1,b)_{18}+\sum_{b=4}^{15}(2,b)_{18} +\sum_{b=6}^{14}(3,b)_{18}+\sum_{b=8}^{13}(4,b)_{18} +\sum_{b=10}^{12}(5,b)_{18}\bigg)\bigg\}=q-2$.
\end{remark}
\vspace{0.3cm}
\begin{remark}
$2\nmid k$ and $q\neq 2^{r}$, then the sum of all $(a,b)_{18}$, $0\leq a,b \leq 17$ is equal to $\bigg\{(0,9)_{18}+2(6,3)_{18}+3\bigg(\sum_{b=0}^{8}(0,b)_{18}+\sum_{b=10}^{17}(0,b)_{18}\bigg)+6\bigg(\sum_{b=0}^{8}(1,b)_{18} +\sum_{b=12}^{17}(1,b)_{18} +\sum_{b=0}^{7}(2,b)_{18}+\sum_{b=14}^{17}(2,b)_{18} +\sum_{b=0}^{6}(3,b)_{18} +\sum_{b=16}^{17}(3,b)_{18} +\sum_{b=0}^{5}(4,b)_{18} +\sum_{b=1}^{3}(5,b)_{18}\bigg)\bigg\}=q-2$.
\end{remark}
\newpage
\begin{theorem}\label{thm3}
Let $l\geq 5$ and $p$ be primes. For some positive integers $r$ and $k$, let $q=p^r=2l^2k+1$. 
\par
Then for $2|k$ or $q=2^{r}$,
\begin{align}\label{exp3}
J_{2l^{2}}(1,n)&=\sum_{a=0}^{2l^{2}-1}\sum_{b=0}^{2l^{2}-1}(a,b)_{2l^{2}}\zeta_{2l^{2}}^{a+bn}\nonumber \\ &= (0,0)_{2l^{2}}+\sum_{b=1}^{2l^{2}-1}(0,b)_{2l^{2}}(\zeta_{2l^{2}}^{bn}+\zeta_{2l^{2}}^{b}+\zeta_{2l^{2}}^{-b(n+1)})+\bigg\{\sum_{b=2}^{2l^{2}-2}(1,b)_{2l^{2}} \nonumber\\ & \quad+\sum_{b=4}^{2l^{2}-3}(2,b)_{2l^{2}}+\sum_{b=6}^{2l^{2}-4}(3,b)_{2l^{2}}
+\dots+\sum_{b=(4l^{2}-4)/3}^{(4l^{2}-4)/3+1}((2l^{2}-2)/3,b)_{2l^{2}}\bigg\} \nonumber\\ & \quad \bigg\{\zeta_{2l^{2}}^{an+b}+\zeta_{2l^{2}}^{a+bn}+\zeta_{2l^{2}}^{a-b(n+1)}+\zeta_{2l^{2}}^{an-b(n+1)}+\zeta_{2l^{2}}^{bn-a(n+1)} +\zeta_{2l^{2}}^{b-a(n+1)}\bigg\}. 
\end{align}

and for $2\nmid k$ and $q\neq 2^{r}$,

\begin{align}\label{exp4}
J_{2l^{2}}(1,n)&=\sum_{a=0}^{2l^{2}-1}\sum_{b=0}^{2l^{2}-1}(a,b)_{2l^{2}}\zeta_{2l^{2}}^{a+bn}\nonumber\\ &= \nonumber (0,l^{2})_{2l^{2}} \zeta_{2l^{2}}^{l^{2}n}+\bigg\{\sum_{b=0}^{l^{2}-1}(0,b)_{2l^{2}}+\sum_{b=l^{2}+1}^{2l^{2}-1}(0,b)_{2l^{2}}\bigg\}\bigg\{\zeta_{2l^{2}}^{bn}+\zeta_{2l^{2}}^{b+(n+1)l^{2}}\nonumber\\ & \quad+\zeta_{2l^{2}}^{l^{2}-b(n+1)}\bigg\}+\bigg\{\sum_{b=0}^{l^{2}-1}(1,b)_{2l^{2}}+\sum_{b=l^{2}+3}^{2l^{2}-1}(1,b)_{2l^{2}} +\sum_{b=0}^{l^{2}-2}(2,b)_{2l^{2}}\nonumber\\ & \quad+\sum_{b=l^{2}+5}^{2l^{2}-1}(2,b)_{2l^{2}} +\sum_{b=0}^{l^{2}-3}(3,b)_{2l^{2}} +\sum_{b=l^{2}+7}^{2l^{2}-1}(3,b)_{2l^{2}}+\dots \nonumber\\  &\quad+ \sum_{b=0}^{l^{2}-(l^{2}-3)/2}(((l^{2}-1)/2)-1,b)_{2l^{2}}+\sum_{b=2l^{2}-2}^{2l^{2}-1}(((l^{2}-1)/2)-1,b)_{2l^{2}}\nonumber \\
&\quad+\sum_{b=1}^{(l^{2}-3)/2}((l^{2}-1)/2+1,b)_{2l^{2}}+\sum_{b=3}^{((l^{2}-3)/2)-1}(((l^{2}-1)/2)+2,b)_{2l^{2}}\nonumber\\ &\quad+\sum_{b=5}^{((l^{2}-3)/2)-2}(((l^{2}-1)/2)+3,b)_{2l^{2}}+\sum_{b=7}^{((l^{2}-3)/2)-3}(((l^{2}-1)/2)+4,b)_{2l^{2}}\nonumber\\ &\quad +\dots+\sum_{b=((l^{2}-3)/2)-((l^{2}-7)/6)-1}^{((l^{2}-3)/2)-((l^{2}-7)/6)}(((l^{2}-1)/2)+((l^{2}-1)/6),b)_{2l^{2}}\nonumber
\end{align}
\begin{align}
&\quad+\sum_{b=0}^{(l^{2}+1)/2}((l^{2}-1)/2,b)_{2l^{2}}\bigg\}\bigg\{\zeta_{2l^{2}}^{a+bn}+\zeta_{2l^{2}}^{b+(n+1)l^{2}+an}+\zeta_{2l^{2}}^{l^{2}+a-b(n+1)}\nonumber\\ &\quad+\zeta_{2l^{2}}^{b+(n+1)l^{2}-a(n+1)}+\zeta_{2l^{2}}^{b-a(n+1)}+\zeta_{2l^{2}}^{l^{2}+an-b(n+1)}\bigg\}.
\end{align}
\end{theorem}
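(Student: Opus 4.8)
The plan is to mimic precisely the proof strategy already used for Theorems \ref{thm1} and \ref{thm2}, since Theorem \ref{thm3} is the direct analogue for order $2l^2$ with general prime $l\geq 5$. First I would recall from \cite{Helal1} the defining property of the cyclotomic numbers $(a,b)_{2l^2}$ of order $2l^2$ over $\mathbb{F}_q$, together with the basic symmetries: for $2\mid k$ or $q=2^r$ one has $(a,b)_{2l^2}=(2l^2-a,b-a)_{2l^2}$ and $(a,b)_{2l^2}=(b,a)_{2l^2}$, while in the remaining case $(a,b)_{2l^2}=(b+l^2,a+l^2)_{2l^2}$ and $(a,b)_{2l^2}=(2l^2-a,b-a)_{2l^2}$. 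Composing these two generating relations in each case yields the two six-fold orbit descriptions, exactly parallel to \eqref{equ3} and \eqref{equ4}: in the first case
\begin{equation*}
(a,b)_{2l^2}=(b,a)_{2l^2}=(a-b,-b)_{2l^2}=(b-a,-a)_{2l^2}=(-a,b-a)_{2l^2}=(-b,a-b)_{2l^2},
\end{equation*}
and in the second case the analogue with shifts by $l^2$ built in, matching the five displayed equalities of \eqref{equ4} with $9$ replaced by $l^2$ and $18$ by $2l^2$.

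Next I would use the standard connection $J_{2l^2}(1,n)=\sum_{a,b}(a,b)_{2l^2}\zeta_{2l^2}^{a+bn}$ (equation \eqref{01} specialized to $e=2l^2$, $i=1$, $j=n$) and group the double sum according to the orbits just found. The key combinatorial point is to classify the orbits by size. In the case $2\mid k$ or $q=2^r$: the pair $(0,0)_{2l^2}$ is a fixed point (singleton orbit); the triples $\{(-a,0)_{2l^2},(a,a)_{2l^2},(0,-a)_{2l^2}\}$ for $1\le a\le 2l^2-1$ form three-element orbits; and — since $l^2$ is odd here — I must check that there is no two-element orbit of the type appearing for $l=3$ (where $(6,12)_{18}=(12,6)_{18}$ occurred). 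Indeed a two-element orbit would require a coincidence like $(a,b)=(b,a)$ with $a\ne b$ forced by another relation; I expect that for $l\ge 5$ this degenerate orbit does not occur, which is exactly why \eqref{exp3} has no extra two-element term unlike \eqref{exp1}. All remaining cyclotomic numbers lie in six-element orbits, and collecting one representative from each — the representatives being $(1,b)$ for $2\le b\le 2l^2-2$, $(2,b)$ for $4\le b\le 2l^2-3$, and so on down to $((2l^2-2)/3,b)$ for the single value $b=(4l^2-4)/3$ — gives precisely the bracketed sum in \eqref{exp3}, each multiplied by the six-term character sum $\zeta^{an+b}+\zeta^{a+bn}+\zeta^{a-b(n+1)}+\zeta^{an-b(n+1)}+\zeta^{bn-a(n+1)}+\zeta^{b-a(n+1)}$ obtained by applying \eqref{equ14}-type substitution to each orbit element.

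For the case $2\nmid k$, $q\ne 2^r$, the same bookkeeping is carried out with the orbit relation involving the shift by $l^2$: now $(0,l^2)_{2l^2}$ is the fixed point, the triples $\{(0,b)_{2l^2},(b+l^2,l^2)_{2l^2},(l^2-b,-b)_{2l^2}\}$ for $0\le b\ne l^2\le 2l^2-1$ are the three-element orbits, and the rest split into six-element orbits whose representatives must be enumerated carefully — this is where the long nested list of summation ranges in \eqref{exp4} comes from (the rows $(1,b)$, $(2,b)$, \dots through $((l^2-1)/2,b)$ together with the ``wrap-around'' rows $((l^2-1)/2+1,b)$, \dots, $((l^2-1)/2+(l^2-1)/6,b)$). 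Then one reads off the associated six-term character combination, which now carries the extra $(n+1)l^2$ and $l^2$ offsets in the exponents exactly as displayed.

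The main obstacle — and the only genuinely non-routine part — is verifying the exact index ranges in the bracketed sums, i.e. proving that the chosen representatives hit each six-element orbit exactly once and miss the singleton and three-element orbits. This amounts to a careful residue computation modulo $2l^2$: one shows that the orbit of a ``generic'' $(a,b)$ meets the strip $\{a'=1\}\cup\{a'=2\}\cup\cdots$ in a unique point, determines which $b'$-interval that point falls in as a function of $a'$, and checks the boundary cases where an orbit could collapse to size three or where two candidate representatives could coincide (the endpoints like $b=2l^2-2$ for the last row, or the single-element row $b=(4l^2-4)/3$). I would handle this by the same orbit-counting identity implicitly underlying Theorem \ref{thm1}: the total count $2l^2+\binom{2l^2-1}{2}/3 = 2l^2+(2l^2-1)(2l^2-2)/6$ of representatives (quoted in the discussion after Algorithm \ref{algo2}) must equal $1+(2l^2-1)+6\cdot(\#\text{six-element orbits})$ in the first case and analogously in the second, and matching this count against the enumerated ranges both verifies minimality and pins down the ranges; the computational confirmation is precisely what Algorithm \ref{algo2} supplies. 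Once the ranges are justified, expressions \eqref{exp3} and \eqref{exp4} follow immediately by substituting the orbit representatives into $J_{2l^2}(1,n)=\sum_{a,b}(a,b)_{2l^2}\zeta_{2l^2}^{a+bn}$.
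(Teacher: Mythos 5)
Your proposal follows essentially the same route as the paper's proof: recall the symmetry relations $(a,b)_{2l^{2}}=(2l^{2}-a,b-a)_{2l^{2}}$ and $(a,b)_{2l^{2}}=(b,a)_{2l^{2}}$ (resp.\ the $l^{2}$-shifted version), compose them into the six-fold orbit relations, partition the cyclotomic numbers into singleton, three-element and six-element classes, and regroup the double sum $\sum_{a,b}(a,b)_{2l^{2}}\zeta_{2l^{2}}^{a+bn}$ over orbit representatives. Your added remarks on the absence of a two-element class for $l\geq 5$ and on checking the representative ranges via the orbit count are elaborations of points the paper leaves implicit, not a different method.
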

\begin{proof}
The cyclotomic numbers $(a, b)_{2l^{2}}$ of order $2l^{2}$  over $\mathbb{F}_{q}$ is defined in \cite{Helal1} as follows:
\begin{center}
	$(a,b)_{2l^{2}}:=\#\{v\in\mathbb{F}_q\setminus \{0,-1\}\mid {\rm ind}_{\gamma}v\equiv a \pmod {2l^{2}} ,  {\ \rm ind}_{\gamma}(v+1)\equiv b \pmod {2l^{2}}\}.$
\end{center}
We now recall the following properties of $(a,b)_{2l^{2}}$ from \cite{Helal1}:
\begin{equation} \label{equ6}
(a, b)_{2l^{2}} \quad = \quad (2l^{2}-a, b-a)_{2l^{2}}, \qquad (a, b)_{2l^{2}} \quad = \quad (b, a)_{2l^{2}}, \quad \text{if}\ k\ \text{is even\ or}\ q=2^{r}
\end{equation} 
and 
\begin{equation}\label{equ7}
(a, b)_{2l^{2}} \quad = \quad (b+l^{2}, a+l^{2})_{2l^{2}} , \qquad (a, b)_{2l^{2}} \quad = \quad (2l^{2}-a, b-a)_{2l^{2}}, \quad otherwise.
\end{equation} 
By permuting (\ref{equ6}) and (\ref{equ7}), we obtain
\begin{equation} \label{equ8}
(a,b)_{2l^{2}}=(b,a)_{2l^{2}}=(a-b,-b)_{2l^{2}}=(b-a,-a)_{2l^{2}}=(-a,b-a)_{2l^{2}}=(-b,a-b)_{2l^{2}}
\end{equation}
and
\begin{align} \label{equ9}
\nonumber &(a,b)_{2l^{2}}=(b+l^{2},a+l^{2})_{2l^{2}}=(l^{2}+a-b,-b)_{2l^{2}}=(l^{2}+b-a,l^{2}-a)_{2l^{2}}\\ &=(-a,b-a)_{2l^{2}}  =(l^{2}-b,a-b)_{2l^{2}}.
\end{align}
We know that $(a,b)_{2l^{2}}$ and $J_{2l^{2}}(1,n)$ are well connected  by the following:
\begin{equation} \label{equ10}
J_{2l^{2}}(1,n)=\sum_{a=0}^{2l^2-1}\sum_{b=0}^{2l^2-1}(a,b)_{2l^{2}}\zeta_{2l^{2}}^{a+bn}
\end{equation}
Thus by (\ref{equ8}) and (\ref{equ9}), cyclotomic numbers $(a,b)_{2l^{2}}$ of order $2l^{2}$ partition into group of classes. If $2|k$ or $q=2^{r}$, (\ref{equ8}) gives classes of singleton, three and six elements. $(0,0)_{2l^{2}}$ forms a singleton class, $(-a,0)_{2l^{2}}$, $(a,a)_{2l^{2}}$, $(0,-a)_{2l^{2}}$ forms classes of three elements for every $1\leq a\leq {2l^{2}}-1 \pmod {{2l^{2}}}$ and rest of the cyclotomic numbers forms classes of six elements. Hence expression (\ref{exp3}) directly follows by the relation (\ref{equ10}).
\\
Now if neither $2|k$ nor $q=2^{r}$, (\ref{equ9}) forms classes of singleton, three and six elements. $(0,l^{2})_{2l^{2}}$ forms a singleton class, $(0,a)_{2l^{2}}$, $(a+l^{2},l^{2})_{2l^{2}}$, $(l^{2}-a,-a)_{2l^{2}}$ forms classes of three elements for every $0\leq a\neq l^2 \leq {2l^{2}}-1 \pmod {{2l^{2}}}$ and rest of the cyclotomic numbers forms classes of six elements. Hence expression (\ref{exp4}) directly follows by the relation (\ref{equ10}).
\end{proof}
\begin{remark}
$\#\bigg\{(0,0)_{2l^{2}}+\sum_{b=1}^{2l^{2}-1}(0,b)_{2l^{2}}+\sum_{b=2}^{2l^{2}-2}(1,b)_{2l^{2}}+\sum_{b=4}^{2l^{2}-3}(2,b)_{2l^{2}}+\sum_{b=6}^{2l^{2}-4}(3,b)_{2l^{2}} 
+\dots+\sum_{b=(4l^{2}-4)/3}^{(4l^{2}-4)/3+1}((2l^{2}-2)/3,b)_{2l^{2}}\bigg\}=\#\bigg\{(0,l^{2})_{2l^{2}} +\sum_{b=0}^{l^{2}-1}(0,b)_{2l^{2}}+\sum_{b=l^{2}+1}^{2l^{2}-1}(0,b)_{2l^{2}}+\sum_{b=0}^{l^{2}-1}(1,b)_{2l^{2}}+\sum_{b=l^{2}+3}^{2l^{2}-1}(1,b)_{2l^{2}} +\sum_{b=0}^{l^{2}-2}(2,b)_{2l^{2}}+\sum_{b=l^{2}+5}^{2l^{2}-1}(2,b)_{2l^{2}} +\sum_{b=0}^{l^{2}-3}(3,b)_{2l^{2}} +\sum_{b=l^{2}+7}^{2l^{2}-1}(3,b)_{2l^{2}}+\dots  +\sum_{b=0}^{l^{2}-(l^{2}-3)/2}(((l^{2}-1)/2)-1,b)_{2l^{2}}+\sum_{b=2l^{2}-2}^{2l^{2}-1}(((l^{2}-1)/2)-1,b)_{2l^{2}}+\sum_{b=1}^{(l^{2}-3)/2}((l^{2}+1)/2,b)_{2l^{2}}+\sum_{b=3}^{((l^{2}-3)/2)-1}(((l^{2}+1)/2)+1,b)_{2l^{2}}+\sum_{b=5}^{((l^{2}-3)/2)-2}(((l^{2}+1)/2)+2,b)_{2l^{2}}+\sum_{b=7}^{((l^{2}-3)/2)-3}(((l^{2}+1)/2)+3,b)_{2l^{2}}+\dots+\sum_{b=((l^{2}-3)/2)-((l^{2}-7)/6)-1}^{((l^{2}-3)/2)-((l^{2}-7)/6)}(((l^{2}+1)/2)+((l^{2}-7)/6),b)_{2l^{2}}+\sum_{b=0}^{(l^{2}+1)/2}((l^{2}-1)/2,b)_{2l^{2}}\bigg\}$.
\end{remark}
\vspace{0.3cm}
\begin{remark}
If $2|k$ or $q=2^{r}$, then the sum of all $(a,b)_{2l^{2}}$, $0\leq a,b \leq (2l^{2}-1)$ and $l\geq 5$ is equal to $\bigg\{(0,0)_{2l^{2}}+3\sum_{b=1}^{2l^{2}-1}(0,b)_{2l^{2}}+6\bigg(\sum_{b=2}^{2l^{2}-2}(1,b)_{2l^{2}}+\sum_{b=4}^{2l^{2}-3}(2,b)_{2l^{2}}+\sum_{b=6}^{2l^{2}-4}(3,b)_{2l^{2}} 
+\dots+\sum_{b=(4l^{2}-4)/3}^{(4l^{2}-4)/3+1}((2l^{2}-2)/3,b)_{2l^{2}}\bigg)\bigg\}=q-2$.
\end{remark}
\vspace{0.3cm}
\begin{remark}
If $2\nmid k$ and $q\neq 2^{r}$, then the sum of all $(a,b)_{2l^{2}}$, $0\leq a,b \leq (2l^{2}-1)$ and $l\geq 5$ is equal to $\bigg\{(0,l^{2})_{2l^{2}} +3\bigg(\sum_{b=0}^{l^{2}-1}(0,b)_{2l^{2}}+\sum_{b=l^{2}+1}^{2l^{2}-1}(0,b)_{2l^{2}}\bigg)+6\bigg(\sum_{b=0}^{l^{2}-1}(1,b)_{2l^{2}}+\sum_{b=l^{2}+3}^{2l^{2}-1}(1,b)_{2l^{2}} +\sum_{b=0}^{l^{2}-2}(2,b)_{2l^{2}}+\sum_{b=l^{2}+5}^{2l^{2}-1}(2,b)_{2l^{2}} +\sum_{b=0}^{l^{2}-3}(3,b)_{2l^{2}} +\sum_{b=l^{2}+7}^{2l^{2}-1}(3,b)_{2l^{2}}+\dots  +\sum_{b=0}^{l^{2}-(l^{2}-3)/2}(((l^{2}-1)/2)-1,b)_{2l^{2}}+\sum_{b=2l^{2}-2}^{2l^{2}-1}(((l^{2}-1)/2)-1,b)_{2l^{2}}+\sum_{b=1}^{(l^{2}-3)/2}((l^{2}+1)/2,b)_{2l^{2}}+\sum_{b=3}^{((l^{2}-3)/2)-1}(((l^{2}+1)/2)+1,b)_{2l^{2}}+\sum_{b=5}^{((l^{2}-3)/2)-2}(((l^{2}+1)/2)+2,b)_{2l^{2}}+\sum_{b=7}^{((l^{2}-3)/2)-3}(((l^{2}+1)/2)+3,b)_{2l^{2}}+\dots+\sum_{b=((l^{2}-3)/2)-((l^{2}-7)/6)-1}^{((l^{2}-3)/2)-((l^{2}-7)/6)}(((l^{2}+1)/2)+((l^{2}-7)/6),b)_{2l^{2}}+\sum_{b=0}^{(l^{2}+1)/2}((l^{2}-1)/2,b)_{2l^{2}}\bigg)\bigg\}=q-2$.
\end{remark}
\section{Fast computational algorithms for Jacobi sums}\label{section5} 
In a given finite field $\mathbb{F}_q$, Jacobi sums of order $e$, mainly depend on two parameters. Therefore, these values could be naturally assembled into a matrix of order $e$. For $e=l^{2}$ or $2l^{2}$, we know that by knowing the Jacobi sums $J_{e}(1,n), 0\leq n \leq (e-1)$, one can readily determine all the Jacobi sums of the respective order \cite{Helal1}. We implement algorithms for fast computation of $J_{l^{2}}(1,n)$ and $J_{2l^{2}}(1,n)$. 
\par
Throughout the algorithms, structure of individual term of a polynomial is by means of class structure \textquotedblleft term"(which is of the form $c\ \zeta_{e}^{d}; e= l^{2} \ \text{or} \ 2l^{2})$ and a different structure for a polynomial by means of a class structure \textquotedblleft poly".
Further \textquotedblleft poly $*p_{a}$" is a variable pointing to the resulting polynomial or say the master polynomial, \textquotedblleft poly $*p_{t}$" is again a variable pointing to keep a polynomial temporarily. The function add\_poly adds two polynomial expression or add a term with a polynomial.
\par 
Every time we declare a term, we need to assign the value of its coefficient and exponent. The function check\_sign\_of\_expo() will check the sign of each of the exponent of input expression and if it has been found to be negative then add $2l^{2}$ (for input expression of order $2l^{2}$) or $l^{2}$ (for input expression of order $l^{2}$) to the corresponding exponent. 
\par
Further function check\_break\_replace(), first checks whether the term has exponent greater than or equals to $l(l-1)$, if so then breaks the exponent into a power of $l(l-1)$ and then replaces each of the polynomial whose exponent is equal to $l(l-1)$ by polynomial $*p_{t}$, where \\
$*p_{t}= \begin{cases}
1-\zeta_{l^{2}}^{l}+\zeta_{l^{2}}^{2l}-\zeta_{l^{2}}^{3l}+\dots-\zeta_{l^{2}}^{l(l-2)} \ \ \ \ \ \ \ if  \ \text{expression is of order $l^{2}$}  ,\\
-1+\zeta_{2l^{2}}^{l}-\zeta_{2l^{2}}^{2l}+\zeta_{2l^{2}}^{3l}+\dots+\zeta_{2l^{2}}^{l(l-2)}\ \ if  \ \text{expression is of order $2l^{2}$}.
\end{cases}$
\begin{algorithm}[H]
\caption{Determination of Jacobi sums of order $l^2$}\label{algo3}
\begin{algorithmic}[1]
\State START
\State Input expression \ref{exp5}, if $l>3$; otherwise expression \ref{exp6} 
\State class term
\State \hspace{4.5mm} int coff
\State \hspace{4.5mm} int exp 
\State \hspace{4.5mm}  check\_sign\_of\_expo()
\State \hspace{4.5mm}  check\_break\_replace()
\State class poly
\State \hspace{4.5mm} term *t
\State \hspace{4.5mm} int degree
\State int main()
\State \hspace{4.5mm} Declare integer variable itr=0, c, n, min=1, max=$l^{2}-1$, a=0, b, $l$
\State \hspace{4.5mm} poly *$p_{a}$ //Declare master polynomial
\State \hspace{4.5mm} poly *$p_{t}$ // Declare a temporary polynomial
\State \hspace{4.5mm} INPUT n and $l$ 
\State \hspace{0.5mm}/*/////////////////////////////////////////////////*/
\State \hspace{4.5mm} INPUT value of (0,0) $\longrightarrow$ c
\State \hspace{9mm} term t1 //Declare a term
\State \hspace{9mm} t1.coff=c
\State \hspace{9mm} add\_poly($p_{a}$, t1) // add a term in polynomial
\State /* line number 22--32 required to evaluate particular case of order $l^2$, considering $l=3$ */
\State \hspace{4.5mm} INPUT value of (3,6) $\longrightarrow$ c
\State \hspace{9mm} term t2 
\State \hspace{9mm} t2.coff=c
\State \hspace{9mm} t2.exp=3+6n
\State \hspace{9mm} t2.check\_break\_replace ($p_{t}$)
\State \hspace{9mm} add\_poly($p_{a}$, $p_{t}$) // add two polynomial
\algstore{exp9}
\end{algorithmic}
\end{algorithm}
\begin{algorithm}[H]
\begin{algorithmic}
\algrestore{exp9}
\vspace{.5cm} 
\State \hspace{9mm} term t3
\State \hspace{9mm} t3.coff=c
\State \hspace{9mm} t3.exp=6+3n
\State \hspace{9mm} t3. check\_break\_replace ($p_{t}$)
\State \hspace{9mm} add\_poly($p_{a}$, $p_{t}$)
\State \hspace{0.5mm} /*/////////////////////////////////////////////////*/
\While {itr!=$(l^{2}-1)/3+1$}
\For {b=min to max and b++}
\State INPUT value of (a,b) $\longrightarrow$ c
\State Set limit=3
\State term t[6] // Declaring a term array 
\State t[0].coff=t[1].coff=t[2].coff=t[3].coff=t[4].coff=t[5].coff=c
\State t[0].exp=an+b
\State t[1].exp=a+bn
\State t[2].exp=a-b(n+1)
\If {min$>$1}
\State t[3].exp=an-b(n+1)
\State t[4].exp=bn-a(n+1)
\State t[5].exp=b-a(n+1)
\State Set limit=6
\EndIf
\For {i=0 to (limit-1) and i++}
\State t[i].check\_sign\_of\_exp()
\State t[i]. check\_break\_replace ($p_{t}$[i])
\State add\_poly($p_{a}$, $p_{t}$[i]) 
\EndFor
\EndFor 
\If {min is equal to 1}
\State min $\longleftarrow$ min + 1
\Else 
\State min $\longleftarrow$ min + 2 
\EndIf
\State max $\longleftarrow$ max - 1
\State itr $\longleftarrow$ itr + 1
\State a $\longleftarrow$ a + 1
\EndWhile
\end{algorithmic}
\end{algorithm}	
As discussed in section 3, classes of cyclotomic numbers of order $l^2$ differ for different values of $l$. For the chosen value of $l\geq5$, classes of cyclotomic numbers remain same but for $l=3$, it forms an additional class, which is a class of two elements.
Algorithm \ref{algo3} determine all the Jacobi sums of order $l^2$. If $l=3$, then initially line number 22-32 is required to evaluate and while loop would execute with a different conditional statement (which is itr!=3). 
\vspace{0.5cm}
\\
The condition in While loop should be itr!=3 because it forms two different classes of six elements and one class of three elements. 

\begin{algorithm}[H]
\caption{Determination of Jacobi sums of order $2l^2$, if either $2|k$ or $q=2^{r}$}\label{algo4}
\begin{algorithmic}[1]
\State START
\State Input expression \ref{exp3}, if $l>3$; otherwise expression \ref{exp1} 
\State class term
\State \hspace{4.5mm} int coff
\State \hspace{4.5mm} int exp 
\State \hspace{4.5mm} check\_sign\_of\_expo()
\State \hspace{4.5mm} check\_break\_replace()
\State class poly
\State \hspace{4.5mm} term *t
\State \hspace{4.5mm} int degree
\State int main()
\State \hspace{4.5mm} Declare integer variable itr=0, c, n, min=1, max=$2l^{2}-1$, a=0, b, $l$
\State \hspace{4.5mm} poly *$p_{a}$ //Declare master polynomial
\State \hspace{4.5mm} poly *$p_{t}$ // Declare a temporary polynomial
\State \hspace{4.5mm} INPUT n and $l$
\State \hspace{0.5mm}/*/////////////////////////////////////////////////*/
\State \hspace{4.5mm} INPUT value of (0,0) $\longrightarrow$ c
\State \hspace{9mm} term t1 //Declare a term
\State \hspace{9mm} t1.coff=c
\State \hspace{9mm} add\_poly($p_{a}$, t1) // add a term in polynomial
\State /* line number 22--32 required to evaluate particular case of order $2l^2$; i.e. $l=3$ */
\State \hspace{4.5mm} INPUT value of (6,12) $\longrightarrow$ c
\State \hspace{9mm} term t2 
\State \hspace{9mm} t2.coff=c
\State \hspace{9mm} t2.exp=6+12n
\State \hspace{9mm} t2.check\_break\_replace ($p_{t}$) 
\State \hspace{9mm} add\_poly($p_{a}$, $p_{t}$) // add two polynomial
\State \hspace{9mm} term t3
\State \hspace{9mm} t3.coff=c
\State \hspace{9mm} t3.exp=12+6n
\State \hspace{9mm} t3. check\_break\_replace ($p_{t}$)
\State \hspace{9mm} add\_poly($p_{a}$, $p_{t}$)
\State \hspace{0.5mm}/*/////////////////////////////////////////////////*/
\algstore{third}
\end{algorithmic}
\end{algorithm}
\begin{algorithm}[H]
\begin{algorithmic}
\algrestore{third}
\vspace{.5cm}
\While {itr!=$\frac{2l^{2}-2}{3}+1$}
\For {b=min to max and b++}
\State INPUT value of (a,b) $\longrightarrow$ c
\State Set limit=3
\State term t[6] // Declaring a term array 
\State t[0].coff=t[1].coff=t[2].coff=t[3].coff=t[4].coff=t[5].coff=c
\State t[0].exp=an+b
\State t[1].exp=a+bn
\State t[2].exp=a-b(n+1)
\If {min$>$1}
\State t[3].exp=an-b(n+1)
\State t[4].exp=bn-a(n+1)
\State t[5].exp=b-a(n+1)
\State Set limit=6
\EndIf
\For {i=0 to (limit-1) and i++}
\State t[i].check\_sign\_of\_exp()
\State t[i]. check\_break\_replace ($p_{t}$[i])
\State add\_poly($p_{a}$, $p_{t}$[i]) 
\EndFor
\EndFor 
\If {min is equal to 1}
\State min $\longleftarrow$ min + 1
\Else 
\State min $\longleftarrow$ min + 2 
\EndIf
\State max $\longleftarrow$ max - 1
\State itr $\longleftarrow$ itr + 1
\State a $\longleftarrow$ a + 1
\EndWhile
\end{algorithmic}
\end{algorithm}

Similarly, classes of cyclotomic numbers of order $2l^2$ differ for different values of $l$. For $l\geq5$, classes of cyclotomic numbers remain same but for $l=3$, it forms an additional class, which is a class of two elements. Algorithm \ref{algo4} implemented to determine all the Jacobi sums of order $2l^2$ under the assumption either $2|k$ or $q=2^{r}$. If $l=3$, then initially line number 22-32 is required to evaluate and while loop would execute with a different conditional statement (which is itr!=6). The condition in While loop should be itr!=6 because it forms five different classes of six elements and one class of three elements. 
\begin{algorithm}[H]
\caption{Determination of Jacobi sums of order $2l^2$, if either $2\nmid k$ or $q\neq 2^{r}$}\label{algo5}
\begin{algorithmic}[1]
\vspace{.5cm}
\State START
\State Input expression \ref{exp4}, if $l>3$; otherwise expression \ref{exp2} 
\State class term
\State \hspace{4.5mm} int coff
\State \hspace{4.5mm} int exp 
\State \hspace{4.5mm} check\_sign\_of\_expo()
\State \hspace{4.5mm} check\_break\_replace()
\State class poly
\State \hspace{4.5mm} term *t
\State \hspace{4.5mm} int degree
\State int main()
\State \hspace{3mm} Declare integer variable count1=0, count2=0, c, n, min1=0, max1=$l^{2}-1$, min2=$l^{2}+1$, max2=$2l^{2}-1$, a=0, b, $l$
\State \hspace{4.5mm} poly *$p_{a}$ //Declare master polynomial
\State \hspace{4.5mm} poly *$p_{t}$ // Declare a temporary polynomial
\State \hspace{4.5mm} INPUT n and $l$
\State \hspace{0.5mm}/*/////////////////////////////////////////////////*/
\State \hspace{4.5mm} INPUT value of (0,$l^{2}$) $\longrightarrow$ c
\State \hspace{9mm} term t1 //Declare a term
\State \hspace{9mm} t1.coff=c
\State \hspace{9mm} t1.exp=$l^{2}$n
\State \hspace{9mm} t1.check\_break\_replace ($p_{t}$) 
\State \hspace{9mm} add\_poly($p_{a}$, $p_{t}$) // add two polynomial
\State /* line number 24--34 required to evaluate particular case of order $2l^2$; i.e. $l=3$ */
\State INPUT value of (6,3) $\longrightarrow$ c
\State \hspace{4.5mm} term t2 
\State \hspace{4.5mm} t2.coff=c
\State \hspace{4.5mm} t2.exp=6+3n
\State \hspace{4.5mm} t2.check\_break\_replace ($p_{t}$) 
\State \hspace{4.5mm} add\_poly($p_{a}$, $p_{t}$) // add two polynomial
\State \hspace{4.5mm} term t3
\State \hspace{4.5mm} t3.coff=c
\State \hspace{4.5mm} t3.exp=12+15n
\State \hspace{4.5mm} t3. check\_break\_replace ($p_{t}$)
\State \hspace{4.5mm} add\_poly($p_{a}$, $p_{t}$)
\State \hspace{0.5mm}/*/////////////////////////////////////////////////*/
\While {count1!=$(l^{2}-1)/2+(l^{2}-1)/6+1$}
\If {count1!=$(l^{2}-1)/2+(l^{2}-1)/6+1$}
\For {b=min1 to max1 and b++}
\State INPUT value of (a,b) $\longrightarrow$ c
\State Set limit=3
\State term t[6] // Declaring a term array 
\State t[0].coff=t[1].coff=t[2].coff=t[3].coff=t[4].coff=t[5].coff=c
\State t[0].exp=a+bn
\State t[1].exp=an+b+9(n+1)
\State t[2].exp=9+a-b(n+1)
\algstore{fourth}
\end{algorithmic}
\end{algorithm}
\begin{algorithm}[H]
\begin{algorithmic}
\algrestore{fourth}
\vspace{.5cm}
\If {a$>$1}
\State t[3].exp=9(n+1)+b-a(n+1)
\State t[4].exp=bn-a(n+1)
\State t[5].exp=9+an-b(n+1)
\State Set limit=6
\EndIf
\For {i=0 to (limit-1) and i++}
\State t[i].check\_sign\_of\_exp()
\State t[i].check\_break\_replace ($p_{t}$[i])
\State add\_poly($p_{a}$, $p_{t}$[i]) 
\EndFor
\EndFor
\State count1++
\If{a==0}
\State max1=max1
\State min1=0
\ElsIf{count1$==(l^{2}-1)/2$}
\State max1=$(l^{2}-3)/2$
\State min1=1
\ElsIf{count1$>(l^{2}-1)/2$}
\State max1=max1-1
\State min1=min1+2
\Else
\State max1=max1-1
\State min1=0
\EndIf
\EndIf
\If {count2!=$(l^{2}-1)/2$}
\For {b=min2 to max2 and b++}
\State INPUT value of (a,b) $\longrightarrow$ c
\State Set limit=3
\State term t[6] // Declaring a term array 
\State t[0].coff=t[1].coff=t[2].coff=t[3].coff=t[4].coff=t[5].coff=c
\State t[0].exp=a+bn
\State t[1].exp=an+b+9(n+1)
\State t[2].exp=9+a-b(n+1)
\If {a$>$0}
\State t[3].exp=9(n+1)+b-a(n+1)
\State t[4].exp=bn-a(n+1)
\State t[5].exp=9+an-b(n+1)
\State Set limit=6
\EndIf
\For {i=0 to (limit-1) and i++}
\State t[i].check\_sign\_of\_exp()
\State t[i].check\_break\_replace ($p_{t}$[i])
\State add\_poly($p_{a}$, $p_{t}$[i]) 
\EndFor
\EndFor
\algstore{fifth}
\end{algorithmic}
\end{algorithm}
\newpage
\begin{algorithm}[H]
\begin{algorithmic}
\algrestore{fifth}
\vspace{.5cm}
\State min2=min2+2
\State count2++
\EndIf
\State a $\longleftarrow$ a + 1
\EndWhile
\end{algorithmic}
\end{algorithm}	

Algorithm \ref{algo4} implemented to determine all the Jacobi sums of order $2l^2$ under the assumption that neither $2|k$ nor $q=2^{r}$. If $l=3$, then initially line number 24-34 is required to evaluate and while loop would execute with a different conditional statement (which is count1!=6). The condition in While loop should be count1!=6 because it forms five different classes of six elements and one class of three elements. 
\section{Conclusion}\label{Conclusion}
In this article, we exhibited fast computational algorithms for determination of all the Jacobi sums of orders $l^2$ and $2l^2$ with $l\geq 3$ a prime. These algorithms were implemented in a High Performance Computing Lab. To increase the efficiency, we presented explicit expressions for Jacobi sums of orders $l^2$ and $2l^{2}$ in terms of the minimum number of cyclotomic numbers of respective orders, which has been utilized in implementing the algorithms. Also, we implemented two additional algorithms to validate the minimality of these expressions. 
\section*{Acknowledgment}
\noindent
The authors acknowledge Central University of Jharkhand, Ranchi, Jharkhand for providing necessary and excellent facilities to carry out this research.

\end{document}